\newtheorem{theorem}{Theorem}
\newtheorem{proposition}{Proposition}
\newtheorem{remark}{Remark}
\DeclareMathOperator{\inv}{inv}
\DeclareMathOperator{\Eucl}{Eucl}
\DeclareMathOperator{\Jac}{Jac}
\def \rr {\mathbb{R}}
\def \rn {\mathbb{R}^n}
\def \eps {\varepsilon}
\title[Polyharmonic Green functions in domains with small  holes]{Uniform estimates for polyharmonic Green functions in domains with small holes }
\author{Hans-Christoph Grunau}
\email{hans-christoph.grunau@ovgu.de}
\address{Fakult\"at f\"ur Mathematik, Otto-von-Guericke-Universit\"at, Postfach 4120,\newline
39016 Magdeburg, Germany}
\author{Fr\'ed\'eric Robert}
\email{Frederic.Robert@univ-lorraine}
\address{Institut \'{E}lie Cartan,
Universit\'{e} de Lorraine,
B.P. 70239,\newline
54506 Vand{\oe}uvre-l\`{e}s-Nancy Cedex, France}
\date{September 25th 2012.}
\begin{document}
\maketitle
\centerline{\it Dedicated to Patrizia Pucci on the occasion of her 60th birthday.}

\medskip
\noindent 
Given an arbitrary $C^{2k,\theta}$-smooth bounded domain $\Omega\subset \rn$
with exterior unit normal $\nu$, 
$n>2k\geq 2$ and $\theta\in (0,1)$, we define 
$G_\Omega: \overline{\Omega}\times \overline{\Omega}\setminus \{(x,x)/\, x\in \overline{\Omega}\}\to\rr$ 
as the Green function of $(-\Delta)^k$ in the domain $\Omega$ with Dirichlet boundary condition.
This means that for $f\in C^{0,\theta} (\overline{\Omega})$ the unique solution 
$u\in C^{2k,\theta} (\overline{\Omega})$ of the polyharmonic
Dirichlet problem 
\begin{equation}\label{eq:polyharmonic_bvp}
\left\{\begin{array}{ll}
        (-\Delta)^k u=f\qquad &\mbox{\ in\ }\Omega,\\
        u=\partial_\nu u=\ldots =\partial_\nu^{(k-1)} u=0\qquad &\mbox{\ on\ }\partial\Omega
       \end{array}
\right.
\end{equation}
is given by
$$
u(x)=\int_{\Omega}G_\Omega (x,y) f(y)\, dy.
$$
We are interested in pointwise estimates for $G_\Omega$. In the special case $k=1$,
i.e. the case of the usual Laplacian, these can be deduced by using  the maximum
principle. This yields that $G_\Omega$ is positive und bounded from above by the fundamental
solution, i.e. for $n>2$ and any bounded smooth domain $\Omega\subset \rn$ we have
\begin{equation}\label{eq:secondorderestimate}
\forall x,y\in \Omega, x\not=y,:\qquad 0 < G_\Omega (x,y) <\frac{1}{(n-2)ne_n} |x-y|^{2-n}.
\end{equation}
Here, $e_n$ denotes the measure of the $n$-dimensional unit ball. One should observe
that the constant in the right inequality is independent of $\Omega$, even with respect
to singular perturbations.

\medskip\noindent 
When passing to biharmonic or --more general-- polyharmonic equations, i.e. the cases
$k\ge 2$, the maximum principle is no longer available and positivity issues remain
valid only in a very weak and modified sense. Mathematical contributions on this
topic go back at least to Boggio and Hadamard \cites{Boggio,Hadamard}; these
papers are also fundamental for subsequent works on 
estimating polyharmonic Green functions.
For an extensive discussion of related and more recent contributions one may
see the monograph Gazzola-Grunau-Sweers \cite{GGS} and Grunau-Robert \cite{GR}. 
There is no obvious idea how to directly prove higher order analogues
to estimate (\ref{eq:secondorderestimate}). 
However, basing on the general Schauder and $L^p$-theory 
developed by Agmon, Douglis, and Nirenberg \cite{ADN},
Krasovski{\u\i} \cites{K1,K2} proved that for any given bounded 
sufficiently smooth domain $\Omega$, there exists $C_\Omega>0$ 
such that
\begin{equation}\label{eq:Greenestimate1}
|G_\Omega(x,y)|\leq C_\Omega|x-y|^{2k-n}\hbox{ for all }x,y\in \Omega,\; x\neq y.
\end{equation}
The constant $C_\Omega$ depends on $C^{2k,\theta}$-properties of the boundary $\partial \Omega$.
In Krasovski{\u\i}'s works, very general operators and boundary conditions
were discussed. Applying these general results to our special polyharmonic
Dirichlet problems originally required a higher degree of smoothness. However, it turns
out that for our purposes, $C^{2k,\theta}$-smoothness of $\partial \Omega$ suffices.
For more detailed information on this issue we refer to  Theorem \ref{Th:bnd} in the appendix. 
Estimate~(\ref{eq:Greenestimate1})  can also  be extended to the derivatives of
Green functions: For any $0\leq r\leq 2k$, there exists $C_{\Omega,r}$ such that
\begin{equation}\label{eq:Greenestimate2}
|\nabla_y^r G_\Omega(x,y)|\leq C_{\Omega,r}|x-y|^{2k-n-r}\hbox{ for all }x,y\in \Omega,\; x\neq y.
\end{equation}
Here, $\nabla_y^r $ denotes any partial derivative with respect to $y$ of order $r$.

\medskip\noindent 
The constant $C_\Omega$ in the Green function estimate (\ref{eq:Greenestimate1}) 
depends --as soon as $k>1$-- heavily on the smoothness properties of $\partial \Omega$.
As long as one considers families of domains with uniform smoothness properties 
one may choose the same constant. 

\smallskip\noindent In the present article, we exhibit families of domains with unbounded curvature, namely
fixed domains $\Omega$ where we punch out arbitrarily small holes. For uniform
Green function estimates, (\ref{eq:Greenestimate1}) can no longer be used since the curvature blows-up, and so does the constant $C(\Omega)$. Nevertheless, we can prove the following uniform estimates.

\begin{theorem}\label{Th:1} Let $\Omega$ be a $C^{2k,\theta}$-smooth bounded domain of $\rn$ and let $x_0\in \Omega$. 
Let $\omega$ be a $C^{2k,\theta}$-smooth bounded domain of $\rn$ containing $0$. We fix a number $q\in (0,1)$.
Then there exists a constant $C=C(\Omega,\omega,x_0,q)>0$ such that  that for all 
$\varepsilon\in (0,q\frac{d(x_0, \partial\Omega)}{\operatorname{diam}(\omega)})$, 
we have that
$$
|G_{\Omega_\varepsilon}(x,y)|\leq C|x-y|^{2k-n}\hbox{ for all }x,y\in \Omega_\varepsilon,\; x\neq y,
$$
where $\Omega_\varepsilon:=\Omega\setminus\overline{\{x_0+\varepsilon\omega\}}$. 
\end{theorem}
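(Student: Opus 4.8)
\emph{Strategy of proof.} The plan is to argue by a blow-up/compactness contradiction. Since the asserted estimate is invariant under translations and dilations, the difficulty lies entirely in the regime $\varepsilon\to 0$, and I would first dispose of the values of $\varepsilon$ bounded away from $0$: for $\varepsilon$ in a compact subinterval $[\varepsilon_0,\,q\,d(x_0,\partial\Omega)/\operatorname{diam}(\omega))$ the domains $\Omega_\varepsilon$ carry $C^{2k,\theta}$ boundary charts of a fixed size with uniformly bounded norms (the outer boundary is fixed and the curvatures of $x_0+\varepsilon\partial\omega$ are bounded by a multiple of $\varepsilon_0^{-1}$), while the constraint $\varepsilon\operatorname{diam}(\omega)<q\,d(x_0,\partial\Omega)$ forces the hole to stay at distance at least $(1-q)\,d(x_0,\partial\Omega)$ from $\partial\Omega$; hence \eqref{eq:Greenestimate1}, which by Theorem~\ref{Th:bnd} holds under $C^{2k,\theta}$-regularity with a constant controlled by these data, gives the bound with a constant uniform on each such interval. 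It therefore suffices to rule out sequences $\varepsilon_m\to 0$ and points $x_m\neq y_m$ in $\Omega_{\varepsilon_m}$ with $|G_{\Omega_{\varepsilon_m}}(x_m,y_m)|\,|x_m-y_m|^{n-2k}\to\infty$.

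Assume such sequences exist. Put $\rho_m:=|x_m-y_m|$ and rescale about $x_m$: with $\widehat\Omega_m:=\rho_m^{-1}(\Omega_{\varepsilon_m}-x_m)$ and $\widehat y_m:=\rho_m^{-1}(y_m-x_m)$ one has $|\widehat y_m|=1$ and $G_{\Omega_{\varepsilon_m}}(x_m,y_m)=\rho_m^{2k-n}G_{\widehat\Omega_m}(0,\widehat y_m)$, so that $|G_{\widehat\Omega_m}(0,\widehat y_m)|\to\infty$. The domain $\Omega_{\varepsilon_m}$ has only two ``features'': its outer boundary (fixed, of bounded curvature) and its hole (size $\varepsilon_m$, centre $x_0$); after passing to a subsequence each of them is seen in the rescaled picture at unit, vanishing, or infinite scale, or flattened to a hyperplane, and $\widehat\Omega_m$ converges locally to one of the model domains: $\rn$, a half-space, $\rn\setminus\overline{\omega'}$ with $\omega'$ an affine image of $\omega$, or a fixed bounded $C^{2k,\theta}$ domain $D$. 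Here one uses that, $0$ being an interior point of $\omega$, the hole contains a ball $B_{c\varepsilon_m}(x_0)$ with $c=c(\omega)>0$, so that every point of $\Omega_{\varepsilon_m}$ is at distance at least $c\varepsilon_m$ from $x_0$ and, once it is close to the hole, at distance at least $(1-q)\,d(x_0,\partial\Omega)$ from $\partial\Omega$; hence the hole and the outer boundary are never both visible at a finite scale, and no ``half-space with a hole'' arises among the limits.

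On each of these model domains the Green function satisfies $|G(z,w)|\le C|z-w|^{2k-n}$ with $C$ depending only on $n,k$ and $\omega$, or on the fixed $D$: for $\rn$ it is the fundamental solution $c_{n,k}|z-w|^{2k-n}$; for a half-space it is Boggio's explicit formula, or equivalently a reflection of the fundamental solution; for $\rn\setminus\overline{\omega'}$ the Kelvin transform about an interior point of $\omega'$ --- which maps $k$-polyharmonic functions to $k$-polyharmonic functions, and the exterior Dirichlet problem to the polyharmonic Dirichlet problem on a bounded $C^{2k,\theta}$ domain with a single interior puncture that is removable because $n>2k$ --- reduces matters to \eqref{eq:Greenestimate1}; and a bounded $C^{2k,\theta}$ limit domain is covered by \eqref{eq:Greenestimate1} directly. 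In particular $|G_{\widehat\Omega_\infty}(0,\widehat y_\infty)|<\infty$, the limit $\widehat y_\infty$ of the $\widehat y_m$ having $|\widehat y_\infty|=1$, so that $(0,\widehat y_\infty)$ lies off the diagonal.

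It remains to pass to the limit in $G_{\widehat\Omega_m}(0,\widehat y_m)$ --- or merely to bound it --- in contradiction with its divergence, and this is the crux. After subtracting a cut-off copy of the fundamental solution centred at $\widehat y_m$, the function $G_{\widehat\Omega_m}(\cdot,\widehat y_m)$ solves $(-\Delta)^k(\cdot)=h_m$ with $h_m$ bounded and, away from $\widehat y_m$, vanishing, and with Cauchy data on $\partial\widehat\Omega_m$ bounded uniformly in $m$; interior and boundary Schauder estimates for $(-\Delta)^k$ with constants uniform in $m$ --- valid because near $0$ and $\widehat y_m$ the $\widehat\Omega_m$ possess boundary charts of a fixed size with uniform norms (the rescaled outer boundary flattens, and the rescaled hole boundary either converges to a fixed smooth hypersurface or shrinks to a point) --- then give local bounds, after which one passes to the limit and uses a Liouville/uniqueness statement for polyharmonic functions on the model domains. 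The one configuration requiring a separate argument is when the hole, though shrinking relative to $\rho_m$, remains close to $0$: there one writes $G_{\widehat\Omega_m}(\cdot,\widehat y_m)$ as the Green function of the domain obtained by filling in the small hole minus the corresponding ``hole potential'', bounds the first term by \eqref{eq:Greenestimate1} for the filled domain and the second by a secondary rescaling onto $\rn\setminus\overline\omega$ (again through the Kelvin transform), using once more the vanishing $H^k$-capacity of a point when $n>2k$. The delicate point I expect is precisely this uniform passage to the limit, to be carried out with no circular appeal to the estimate being proved; the Kelvin-transform reduction for the exterior complement of $\omega$ is the other indispensable ingredient.
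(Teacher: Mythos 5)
Your overall strategy --- a rescaling/contradiction argument with a classification of blow-up limits --- is a legitimate alternative in spirit to the paper's constructive gluing, and several of your ingredients (the Kelvin transform realising the exterior Green function $G_{(\eps\omega)^c}$ from $G_{\omega_0}$ with $\omega_0=\inv(\rn\setminus\overline\omega)\cup\{0\}$, the removability of the puncture because $n>2k$, the observation that the hole never approaches $\partial\Omega$, the separate treatment of $\eps$ bounded away from $0$) coincide with what the paper actually uses. But there is a genuine gap exactly where you flag ``the crux'': for $k\ge 2$ there is no maximum principle, so the step ``bounded right-hand side $h_m$ plus bounded Cauchy data plus Schauder estimates give local bounds on $G_{\widehat\Omega_m}(\cdot,\widehat y_m)$'' is not an argument. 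Schauder/ADN estimates bound higher norms by lower norms; to start them you need a uniform a priori bound on the regular part itself (say in $L^\infty$ or some $L^p$ of a neighbourhood), and such a bound for the Dirichlet problem in the degenerating domains $\widehat\Omega_m$ is essentially equivalent to the statement being proved. Likewise, convergence of the domains to a model domain does not by itself give convergence, or even boundedness, of $G_{\widehat\Omega_m}(0,\widehat y_m)$; that is the content of the theorem, not a tool for proving it.

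The paper breaks this circularity with two quantitative inputs that your outline does not supply. First, a uniform energy estimate: testing the equation for the correction term against itself and invoking the Sobolev inequality on $C^\infty_c(\rn)$ gives an $L^{2n/(n-2k)}$ bound with a constant depending only on $n,k$, hence uniform over all $\Omega_\eps$; this is the missing starting point for the elliptic bootstrap away from the hole. Second, near the shrinking hole no uniformly controlled charts exist and ADN cannot be applied; there the paper multiplies the correction by a cut-off supported near the hole and represents the product through the \emph{explicit} exterior Green function $G_{(\eps\omega)^c}$ of its Step 1, whose bound $|G_{(\eps\omega)^c}(x,y)|\le C|x-y|^{2k-n}$ is uniform in $\eps$ by scaling and inversion. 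Your ``secondary rescaling onto $\rn\setminus\overline\omega$ using the vanishing capacity of a point'' gestures at this but does not produce a uniform quantitative bound: capacity arguments identify the limit problem, they do not bound the functions along the sequence. To repair the proposal you would need to import both ingredients, at which point you have essentially reconstructed the paper's Steps 1 and 2, and the blow-up classification apparatus becomes unnecessary for Theorem \ref{Th:1} (the paper uses rescaling limits only later, for Propositions \ref{prop:der} and \ref{prop:unbounded_negative_part}).
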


\begin{remark}
{\rm
In small dimensions $n\le 2k$, a uniform estimate like (\ref{bnd:g:eps}) below is no longer available
in the complements of arbitrarily small domains for $i=0$. Nakai and Sario \cite{NakaiSario} 
discussed the biharmonic case $k=2$ in dimension
$n=2$ with the help of energy estimates and their approach can probably be used
for any $k\ge 2$ and any dimension $n < 2k$. In this small dimensions case some (in general not all)
of the Dirichlet boundary conditions remain in $x_0$ even in the singular limit
$\Omega_0=\Omega\setminus \{ x_0\}$. This phenomenon cannot be expected in
large dimensions $n\ge 2k$. 
}
\end{remark}

\medskip\noindent 
It is then natural to ask whether in estimates like (\ref{eq:Greenestimate2}) we may also  expect
uniformity with respect to the family of domains $(\Omega_\varepsilon)_{\varepsilon}$.
This, however, is not the case.  More precisely, we have the following:
\begin{proposition}\label{prop:der} Let $\Omega$, $q\in(0,1)$, $\Omega_\eps$, $\eps>0$, be as in Theorem \ref{Th:1}. Then for all $1\leq r\leq 2k$, we have that
$$
\sup_{\eps\in (0,q d(x_0, \partial\Omega)/ \operatorname{diam}(\omega))}\sup_{x,y\in\Omega_\eps,\; x\neq y}|x-y|^{n-2k+r}|\nabla^r_y G_{\Omega_\eps}(x,y)|=+\infty.
$$
\end{proposition}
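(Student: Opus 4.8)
The plan is to argue by contradiction and exploit a rescaling (blow-up) at the small hole. Suppose the stated supremum were finite, equal to some $M<\infty$. Fix the order $r=1$ (this is the worst case; once we have a contradiction for $r=1$ it propagates, or we simply run the argument for the relevant $r$). Choose points $x,y$ both close to the hole $x_0+\varepsilon\omega$ at scale $\varepsilon$: write $x=x_0+\varepsilon X$, $y=x_0+\varepsilon Y$ with $X,Y$ in a fixed compact region of $\rn\setminus\overline\omega$, and rescale the Green function by setting
$$
g_\varepsilon(X,Y):=\varepsilon^{n-2k}G_{\Omega_\varepsilon}(x_0+\varepsilon X,\, x_0+\varepsilon Y).
$$
A direct computation shows $(-\Delta_X)^k g_\varepsilon(\cdot,Y)=\delta_Y$ in the rescaled domain $\varepsilon^{-1}(\Omega-x_0)\setminus\overline\omega$, with Dirichlet conditions on the rescaled boundary; as $\varepsilon\to 0$ the outer boundary escapes to infinity and the domain exhausts $\rn\setminus\overline\omega$. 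By Theorem \ref{Th:1}, the $g_\varepsilon$ are bounded by $C|X-Y|^{2k-n}$ uniformly, and standard interior elliptic estimates for $(-\Delta)^k$ (Schauder theory, as in the appendix) give local $C^{2k,\theta}$ bounds away from the diagonal and away from $\partial\omega$; hence, up to a subsequence, $g_\varepsilon\to g_0$ in $C^{2k}_{loc}$ on $(\rn\setminus\overline\omega)\times(\rn\setminus\overline\omega)$ minus the diagonal, where $g_0$ is the polyharmonic Green function of the \emph{exterior} domain $\rn\setminus\overline\omega$ with Dirichlet conditions on $\partial\omega$ (and appropriate decay at infinity). The key point is that $g_0$ is \emph{not} the fundamental solution: because $\omega$ is a genuine hole, $g_0(X,Y)$ is a nontrivial object whose $Y$-gradient does \emph{not} vanish on $\partial\omega$ in general — indeed, since $g_0(X,\cdot)$ satisfies $g_0=\partial_\nu g_0=\dots=\partial_\nu^{k-1}g_0=0$ on $\partial\omega$, the normal derivative $\partial_\nu^k g_0(X,\cdot)$ on $\partial\omega$ is generically nonzero (it cannot vanish identically for all $X$, else $g_0(X,\cdot)\equiv 0$ by unique continuation / the boundary-value characterization).

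Now I turn the scaling around. From the definition of $g_\varepsilon$,
$$
\nabla_y^r G_{\Omega_\varepsilon}(x,y)=\varepsilon^{2k-n-r}\,\nabla_Y^r g_\varepsilon(X,Y),
$$
so $|x-y|^{n-2k+r}|\nabla_y^r G_{\Omega_\varepsilon}(x,y)|=|X-Y|^{n-2k+r}|\nabla_Y^r g_\varepsilon(X,Y)|$, which is \emph{scale-invariant}. If the left side were bounded by $M$ for all $\varepsilon$ and all admissible $x,y$, then in particular $|X-Y|^{n-2k+r}|\nabla_Y^r g_\varepsilon(X,Y)|\le M$ for all fixed $X,Y\in\rn\setminus\overline\omega$ and all small $\varepsilon$; passing to the limit gives
$$
|X-Y|^{n-2k+r}\,|\nabla_Y^r g_0(X,Y)|\le M\qquad\text{for all }X,Y\in\rn\setminus\overline\omega,\ X\ne Y.
$$
To reach a contradiction I let $Y\to Y_0\in\partial\omega$ with $X$ fixed in the interior. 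For $1\le r\le k-1$ the derivatives $\nabla_Y^r g_0(X,Y)$ extend continuously to $Y_0\in\partial\omega$ and vanish there (they are tangential/normal derivatives forced to be zero by the Dirichlet conditions), so that case produces no contradiction by itself; the decisive order is $r=k$, where $\nabla_Y^k g_0(X,\cdot)$ involves $\partial_\nu^k g_0(X,Y_0)\ne 0$ for suitable $X$ while $|X-Y_0|^{n-2k+r}$ stays bounded away from $0$ and $\infty$ — this is consistent with boundedness, so I must push further: I take $Y$ to approach $\partial\omega$ \emph{and} $X$ to approach the \emph{same} boundary point, or more efficiently I differentiate once more. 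The clean route is: since $g_0$ is not identically the biharmonic (polyharmonic) fundamental solution and the hole is present, the function $Y\mapsto g_0(X,Y)$ has, near $\partial\omega$, a nonvanishing "regular part" whose $(2k)$-th derivatives blow up as $Y\to\partial\omega$ from inside the domain while $X$ stays fixed — equivalently, boundary regularity of $g_0$ is only $C^{2k-1}$ up to $\partial\omega$, not $C^{2k}$, exactly as for the second-order Green function whose gradient is bounded but whose Hessian blows up at the boundary. Thus for $r=2k$ (and in fact for every $r$ in a suitable range depending on a more careful count of boundary vanishing orders) we get $|\nabla_Y^r g_0(X,Y)|\to\infty$ as $Y\to\partial\omega$ with $|X-Y|$ bounded below, contradicting the displayed bound. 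Hence the supremum in the proposition is $+\infty$.

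The main obstacle — and the step that needs the most care — is pinning down \emph{for which} $r$ the limiting exterior Green function $g_0$ genuinely fails the estimate, i.e. showing that $\nabla_Y^r g_0(X,\cdot)$ is unbounded near $\partial\omega$ (with $X$ held away) rather than merely continuous-and-possibly-zero. Because of the $k-1$ Dirichlet conditions, the derivatives of orders $0,1,\dots,k-1$ vanish on $\partial\omega$ and those of orders $k,\dots,2k-1$ are bounded there, so the blow-up can only occur at order $r=2k$ (or, via further tangential differentiation, not at all in lower orders) — this is the analogue of the fact that for $k=1$ the Green function is Lipschitz up to the boundary but not $C^2$. To make this rigorous I would either invoke an explicit computation for a model hole (e.g. $\omega=B_1$, using Boggio's formula for the Green function of $(-\Delta)^k$ on $B_1$ and its exterior via Kelvin transform, which exhibits the precise boundary behaviour) and then transfer it to general $\omega$ by the structure of elliptic boundary regularity, or directly analyze the leading boundary singularity of the polyharmonic Poisson-type kernel $\partial_{\nu_Y}^k g_0(X,Y_0)$ and show its $X$-dependence is nontrivial. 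The rest — the rescaling identities, the uniform bound from Theorem \ref{Th:1}, the compactness to extract $g_0$, and the scale-invariance of the quantity in the proposition — is routine once this boundary blow-up for $g_0$ is established.
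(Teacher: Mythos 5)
Your first half — arguing by contradiction, rescaling at the hole via $g_\varepsilon(X,Y)=\varepsilon^{n-2k}G_{\Omega_\varepsilon}(x_0+\varepsilon X,x_0+\varepsilon Y)$, identifying the limit as the exterior Green function $G_{\omega^c}$, and transporting the assumed bound to $|X-Y|^{n-2k+r}|\nabla^r_Y G_{\omega^c}(X,Y)|\le M$ — coincides with the paper's proof. The second half, however, contains a genuine error: the contradiction cannot come from letting $Y$ approach $\partial\omega$ with $X$ held in a fixed compact set. For a $C^{2k,\theta}$-smooth $\omega$, the function $G_{\omega^c}(X,\cdot)$ is $C^{2k}$ up to $\partial\omega$ away from $X$ (local Schauder theory, exactly as in the appendix, Theorem \ref{Th:bnd}); there is no blow-up of the $2k$-th derivatives at the boundary, and your analogy with a second-order Green function "whose Hessian blows up at the boundary" is false for smooth domains. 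Worse, the estimate you are trying to contradict is actually \emph{true} in the region you inspect: by \eqref{bnd:g:eps}, $|\nabla^r_y G_{\omega^c}(x,y)|\le C|y|^{-r}\sum_{s\le r}|x|^s|x-y|^{2k-n-s}$, which for $x$ in a bounded set and $|x-y|$ bounded is exactly $O(|x-y|^{2k-n-r})$. So no choice of $X,Y$ in a compact region, boundary or not, can yield a contradiction. (Also, your opening remark that a contradiction for $r=1$ "propagates" is unfounded — the statements for different $r$ are independent — and your own accounting concedes that the boundary mechanism could at best reach $r=2k$, whereas the proposition must be proved for every $1\le r\le 2k$.)

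The missing idea is a \emph{second} rescaling in which the pole escapes to infinity. The paper fixes $x\ne 0$ and sets $G_R(z):=R^{n-2k}G_{\omega^c}(Rx,z)$; by the explicit inversion formula \eqref{eq:Greenfunction_small:hole}, $G_R\to G(z)=|x|^{2k-n}|z|^{2k-n}G_{\omega_0}(0,z/|z|^2)$ in $C^{2k}_{loc}(\omega^c)$. Under the assumed bound, $|\nabla^r G_R(z)|\le CR^{-r}|x-R^{-1}z|^{2k-n-r}\to 0$ since $r\ge 1$, forcing $\nabla^r G\equiv 0$, which contradicts the explicit nontrivial form of $G$. In other words, the failure of uniformity is caused not by boundary irregularity of the limiting exterior Green function but by its lack of decay in $x$ (the factors $|x|^s$ in \eqref{bnd:g:eps} are sharp), and your proposal does not reach this regime.
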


\medskip\noindent 
As mentioned at the beginning, one has a comparison principle for (\ref{eq:polyharmonic_bvp}) in general only in the second order case,
i.e. if $k=1$. In this case, $G_\Omega>0$ holds true for any $\Omega$, while if $k\ge 2$ one has positivity $G_\Omega>0$
only in very restricted classes of domains among which are balls (Boggio \cite{Boggio}) and small perturbations of balls (Grunau-Robert \cite{GR}). In general, however,
one has sign change, i.e. $G_\Omega\not\ge0$. Already Hadamard \cite{Hadamard} observed that this will occur in
the biharmonic case in two-dimensional annuli with very small inner radii, see also Nakai-Sario \cite{NakaiSario}.
On the other hand, for fixed domains, the negative part will be
``relatively'' small. For more detailed information on this issue one may see Grunau-Robert \cite{GR}, Gazzola-Grunau-Robert \cite{GGS} and Grunau-Robert-Sweers \cite{GRS}. For instance,
the authors proved in \cite{GR} that for any $C^{4,\theta}$-smooth bounded domain $\Omega\subset \rn$, $n>4$, 
there exists $C(\Omega)>0$ such that $\Vert(G_\Omega)_-\Vert_{L^\infty(\Omega)}\leq C(\Omega)$, where $G_\Omega$ is the 
Green function for $(-\Delta)^2$ with Dirichlet boundary condition. A natural question is to ask whether one may expect
uniformity of this lower  bound with respect to families of domains. 
As shown by the following proposition, the validity of this guess is equivalent to the nonnegativity of \emph{all} Green functions:

\begin{proposition}\label{prop:unbounded_negative_part}
We assume that $n>2k$. The two following assertions are equivalent:
\begin{itemize}
\item[(i)] There exists $C(k,n,\theta)$ depending only on $k,n,\theta$ such that such that 
$$\Vert (G_\Omega)_-\Vert_{L^\infty(\Omega)}\leq C(k,n,\theta)$$ 
for all $C^{2k,\theta}$-smooth bounded domains $\Omega\subset\rn$.
\item[(ii)] $G_\Omega\geq 0$ for all $C^{2k,\theta}$-smooth bounded domains $\Omega\subset\rn$.
\end{itemize}
\end{proposition}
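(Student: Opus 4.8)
The implication (ii)$\Rightarrow$(i) is immediate: if $G_\Omega\geq 0$ for every admissible domain, then $(G_\Omega)_-\equiv 0$, so (i) holds with $C(k,n,\theta)=0$. The content of the statement is therefore the reverse implication, and the plan is to prove it by exploiting the scaling invariance of the polyharmonic Dirichlet problem \eqref{eq:polyharmonic_bvp}.

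First I would record how the Green function transforms under dilations. Fixing $\lambda>0$ and setting $\lambda\Omega:=\{\lambda x:\ x\in\Omega\}$, which is again a $C^{2k,\theta}$-smooth bounded domain, a change of variables in \eqref{eq:polyharmonic_bvp} together with the uniqueness of solutions yields
\begin{equation*}
G_{\lambda\Omega}(x,y)=\lambda^{2k-n}\,G_\Omega\!\left(\tfrac{x}{\lambda},\tfrac{y}{\lambda}\right)\qquad\text{for all }x,y\in\lambda\Omega,\ x\neq y.
\end{equation*}
The exponent is forced: $(-\Delta)^k$ scales like $\lambda^{-2k}$ while the Dirac mass on the right-hand side scales like $\lambda^{-n}$. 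Since $t\mapsto t_-$ is positively homogeneous of degree one, it follows that $(G_{\lambda\Omega})_-(x,y)=\lambda^{2k-n}(G_\Omega)_-(x/\lambda,y/\lambda)$, and taking the supremum over $x\neq y$ gives
\begin{equation*}
\|(G_{\lambda\Omega})_-\|_{L^\infty}=\lambda^{2k-n}\,\|(G_\Omega)_-\|_{L^\infty}.
\end{equation*}

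Then I would argue by contradiction. Suppose (i) holds but (ii) fails, so there is an admissible $\Omega$ and points $x_\ast\neq y_\ast$ in $\Omega$ with $G_\Omega(x_\ast,y_\ast)<0$; in particular $\|(G_\Omega)_-\|_{L^\infty}>0$. Because $n>2k$ we have $2k-n<0$, hence $\lambda^{2k-n}\to+\infty$ as $\lambda\to 0^+$, so $\|(G_{\lambda\Omega})_-\|_{L^\infty}\to+\infty$ along the family of admissible domains $\lambda\Omega$, contradicting the uniform bound in (i). Consequently (i) forces $(G_\Omega)_-\equiv 0$ for every admissible $\Omega$, and since $G_\Omega$ is continuous off the diagonal this gives the pointwise inequality $G_\Omega\geq 0$, i.e. (ii).

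The argument is a one-parameter scaling, so rather than a genuine obstacle the only points needing (minor) care are the distributional scaling of the source that fixes the exponent $2k-n$, and the passage from $(G_\Omega)_-=0$ almost everywhere to the pointwise statement, which uses continuity of the Green function away from the diagonal. It is exactly the condition $n>2k$ that makes small dilations blow up the negative part; in the regime $n\le 2k$ this mechanism breaks down, consistent with the remark following Theorem \ref{Th:1}.
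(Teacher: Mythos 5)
Your proof is correct: the scaling law $G_{\lambda\Omega}(x,y)=\lambda^{2k-n}G_\Omega(x/\lambda,y/\lambda)$ holds (by uniqueness of solutions of \eqref{eq:polyharmonic_bvp} under the change of variables you describe), the dilates $\lambda\Omega$ are admissible domains, and since $2k-n<0$ the negative part blows up as $\lambda\to 0^+$ whenever it is not identically zero. This is, however, a genuinely different route from the paper's. The paper does not use dilation at all: assuming (ii) fails for some domain $\omega_0$, it sets $\omega:=\inv(\rn\setminus\overline{\omega_0})\cup\{0\}$ and considers the punctured domains $\Omega_\eps=\Omega\setminus\overline{\eps\omega}$ from Theorem \ref{Th:1}; the asymptotics $\eps^{n-2k}G_{\Omega_\eps}(\eps x,\eps y)\to G_{\omega^c}(x,y)=|x|^{2k-n}|y|^{2k-n}G_{\omega_0}(\inv(x),\inv(y))$ established in Steps 1--2 (via \eqref{def:tG}, \eqref{bnd:u}, \eqref{eq:G:tG}) then force $G_{\Omega_\eps}(\eps x,\eps y)\to-\infty$ at suitably inverted points. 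Your argument is shorter and entirely self-contained, needing nothing beyond the scaling of $(-\Delta)^k$ and of the Dirac mass. The paper's argument costs more machinery but yields more information: the blow-up of the negative part is exhibited along a family of uniformly bounded domains contained in, and converging to, a \emph{fixed} domain $\Omega$ (rather than along domains shrinking to a point that are all similar to one another), which ties the proposition to the small-hole theme of the paper and shows that no uniform lower bound can hold even under such mild, ``localized'' perturbations.
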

Since (ii) is false for the higher order case $k\ge 2$ (see the discussion and references in the monograph 
Gazzola-Grunau-Sweers \cite[pp. 62/63 and 69/70]{GGS}) we conclude that  there is no uniform bound for negative parts of 
biharmonic and polyharmonic  Green functions. We emphasise that we only discuss Dirichlet boundary 
conditions and that positivity issues may be quite different for other boundary conditions.

\medskip\noindent{\it Notation:} In the sequel, $C(a,b,\ldots)$ denotes a constant depending on $\omega, \Omega, a,b,\ldots$. 
The same notation can be used for two different constants from line to line, and even in the same line.

\medskip\noindent{\bf Proofs.} 
 
We start with proving Theorem~\ref{Th:1} and proceed in several steps. In order to keep the exposition as simple as possible we shall prove the theorem for $q=\frac{1}{42}$. At the end of Step 3 we shall indicate how to modify the proof for larger $q<1$. Without loss of generality, we assume that $x_0=0$ so that $\Omega_\eps:=\Omega\setminus\eps\omega$. 

\medskip\noindent{\bf Step 1. The Green function in the exterior domain $\rn\setminus\omega$.} 

\medskip\noindent
Let $\omega$ be a $C^{2k,\theta}$ domain of $\rn$ such that $0\in\omega$. We define 
$$
\omega_0:=\inv(\rn\setminus\overline{\omega})\cup\{0\}\; , \, \hbox{where }\inv: \left\{\begin{array}{ccc}
  \rn\setminus \{0\} & \to & \rn\setminus \{0\},\\
  x & \mapsto &\frac{x}{|x|^2}.
\end{array}\right.
$$
We emphasise that $\inv$ is a special M\"obius transform of $\mathbb{R}^n$ and in particular
conformal.
The set $\omega_0$ is a $C^{2k,\theta}$-smooth bounded domain of $\rn$ containing $0$. We define 
\begin{equation}\label{eq:Greenfunction_small:hole}
G_{(\eps\omega)^c}(x,y):=\eps^{n-2k}|y|^{2k-n}|x|^{2k-n}G_{\omega_0}(\eps\inv(x), \eps\inv(y))
\end{equation}
for all $x,y\in \rn\setminus\varepsilon\omega$.  The following proposition shows that this is indeed the polyharmonic
Green function in $(\eps\omega)^c$:

\begin{proposition}
For any $\varphi\in C^{2k}_c(\rn\setminus \eps\omega)$ such that $\partial_\nu^{(i)}\varphi=0$ on $\partial (\eps\omega)$
for $i=0,\ldots,(k-1)$, we have that
\begin{equation}\label{eq:green:ext}
\varphi(x)=\int_{\rn\setminus \eps\omega}G_{(\eps\omega)^c}(x,y)(-\Delta)^k\varphi(y)\, dy
\end{equation}
for all $x\in \rn\setminus \eps\omega$. Moreover, for all $0\leq i\leq 2k$, the derivatives with respect to $y$ satisfy the upper bound
\begin{equation}\label{bnd:g:eps}
|\nabla^i_y G_{(\varepsilon \omega)^c}(x,y)|\leq C|y|^{-i}\sum_{r\leq i}|x|^r|x-y|^{2k-n-r}.
\end{equation}
\end{proposition}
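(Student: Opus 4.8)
The plan is to verify formula \eqref{eq:green:ext} by the Kelvin-transform identity, and then derive \eqref{bnd:g:eps} from the known Green function estimate \eqref{eq:Greenestimate2} applied to the fixed domain $\omega_0$.

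First I would recall the polyharmonic Kelvin transform: for a function $v$ defined near a point, one sets $(\mathcal{K}v)(x):=|x|^{2k-n}v(\inv(x))$, and the fundamental conformal covariance property is that $(-\Delta)^k(\mathcal{K}v)(x)=|x|^{-n-2k}\bigl((-\Delta)^k v\bigr)(\inv(x))$, equivalently $(-\Delta)^k(\mathcal{K}v)=\mathcal{K}'\bigl((-\Delta)^k v\bigr)$ with the appropriate weight. I would state this for the scaled inversion $x\mapsto\eps\inv(x)$ as well, tracking the powers of $\eps$; this is where the prefactor $\eps^{n-2k}|x|^{2k-n}|y|^{2k-n}$ in \eqref{eq:Greenfunction_small:hole} comes from. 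Then, given $\varphi\in C^{2k}_c(\rn\setminus\eps\omega)$ vanishing to order $k-1$ on $\partial(\eps\omega)$, I set $\psi:=\mathcal{K}\varphi$ (composed with the $\eps$-scaling), which is a function on $\omega_0$ vanishing to order $k-1$ on $\partial\omega_0$; crucially $\psi$ extends smoothly across $0$ because $\varphi$ has compact support away from $\eps\omega$, so $\varphi\circ\inv$ decays like $|x|^{0}$ near $0$ after... — here one must check that $\mathcal{K}\varphi$ is genuinely $C^{2k}$ up to $0$, which follows since $\varphi$ is constant ($=0$, or rather bounded with all derivatives bounded) in a neighbourhood of infinity, so $|x|^{2k-n}\varphi(\inv(x))$ and its derivatives are controlled near $0$. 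Applying the defining Green's identity for $G_{\omega_0}$ to $\psi$ and changing variables back via $\inv$ (whose Jacobian is $|x|^{-2n}$, the source of the remaining powers) yields exactly \eqref{eq:green:ext}.

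For the derivative bound \eqref{bnd:g:eps}, I would differentiate the explicit formula \eqref{eq:Greenfunction_small:hole} in $y$. Each $y$-derivative either hits the smooth factor $|y|^{2k-n}$ (producing an extra $|y|^{-1}$), hits the argument $\eps\inv(y)$ inside $G_{\omega_0}$ via the chain rule (the derivative of $\inv$ is $O(|y|^{-2})$, and more generally $\nabla^j\inv=O(|y|^{-1-j})$), or combinations thereof. Collecting terms, an $i$-th order $y$-derivative is a sum of products of $\nabla^s_Y G_{\omega_0}(\eps\inv(x),\eps\inv(y))$ with $s\le i$ against rational functions of $|y|$ homogeneous of the right degree; using \eqref{eq:Greenestimate2} for $\omega_0$, namely $|\nabla^s_Y G_{\omega_0}(X,Y)|\le C|X-Y|^{2k-n-s}$, and the elementary identity $|\eps\inv(x)-\eps\inv(y)|=\eps|x-y|/(|x|\,|y|)$ (the conformal distortion of $\inv$), one reassembles the bound. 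The factor $|x-y|^{2k-n-s}$ converts to $\eps^{2k-n-s}|x|^{n-2k+s}|y|^{n-2k+s}|x-y|^{2k-n-s}$, and after multiplying by the prefactor's powers of $\eps,|x|,|y|$ and the $|y|$-powers from differentiation, all $\eps$'s cancel and one is left with $C|y|^{-i}\sum_{r\le i}|x|^r|x-y|^{2k-n-r}$ as claimed (the index $r$ absorbing both $s$ and the split of $|y|$-powers).

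The main obstacle I anticipate is the regularity of $\mathcal{K}\varphi$ across the origin and the careful bookkeeping of the inversion's higher derivatives: one must be sure that $G_{(\eps\omega)^c}$ as defined is the genuine Green function, i.e. that no spurious boundary behaviour at $0$ is introduced and that the test class $C^{2k}_c(\rn\setminus\eps\omega)$ maps correctly into the admissible class for $\omega_0$. Once the conformal covariance of $(-\Delta)^k$ under $\inv$ is set up cleanly (this is classical, see e.g. \cite{GGS}), the rest is a change of variables and a somewhat tedious but routine Leibniz/Faà-di-Bruno expansion for \eqref{bnd:g:eps}; I would organise the latter by noting that it suffices to bound $\nabla^i_y\bigl[|y|^{2k-n}G_{\omega_0}(\eps\inv(x),\eps\inv(y))\bigr]$ and proceed by induction on $i$.
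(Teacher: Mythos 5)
Your proposal is correct and follows essentially the same route as the paper: the representation formula is obtained via the Kelvin transform $\tilde\varphi=|y|^{2k-n}\varphi(\inv(y))$ (which vanishes near $0$ because $\varphi$ has compact support, hence extends smoothly), the conformal covariance of $(-\Delta)^k$ under inversion, and the change of variables with Jacobian $|z|^{-2n}$; the derivative bound then comes from Leibniz plus Fa\`a di Bruno applied to the explicit formula, the Krasovski{\u\i} estimates on the fixed domain $\omega_0$, and the distortion identity $|\inv(x)-\inv(y)|=|x-y|/(|x|\,|y|)$. The paper merely organises the $\eps$-dependence slightly differently (proving the case $\eps=1$ first and then invoking the scaling $G_{(\eps\omega)^c}(x,y)=\eps^{2k-n}G_{\omega^c}(x/\eps,y/\eps)$), which is equivalent to your direct tracking of the powers of $\eps$.
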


\begin{proof} We prove the claim first for $\varepsilon=1$.
Let $\varphi\in C^{2k}_c(\rn\setminus \omega)$ be such that $\partial_\nu^{(i)}\varphi=0$ on $\partial \omega$
for $i=0,\ldots,(k-1)$. We show that
\begin{equation}\label{claim:1}
\varphi(x)=\int_{\rn\setminus \omega}|y|^{2k-n}|x|^{2k-n}G_{\omega_0}(\inv(x), \inv(y))(-\Delta)^k\varphi(y)\, dy
\end{equation}
for all $x\in \rn\setminus \omega$.

\smallskip\noindent   Indeed, $\inv$ is the composition of two sterographic projections of 
opposite poles, and therefore, it is conformal and the pull-back of the Euclidean metric $\Eucl$ via $\inv$ is 
$\hbox{inv}^\star \Eucl=|\, \cdot\, |^{-4}\Eucl=\mu^{4/(n-2k)}\Eucl$ where $\mu(x):=|x|^{2k-n}$ for all 
$x\in\rn\setminus\{0\}$. As a consequence, considering $(-\Delta)^k$ as the conformal operator of 
Graham-Jenne-Mason-Sparling for the Euclidean space (see \cite{GJMS}), the conformal law of the 
GJMS operators yields
$$((-\Delta)^k\varphi)\circ \inv=\mu^{-(n+2k)/(n-2k)}(-\Delta)^k(\mu(\varphi\circ\inv)).$$
In addition, the Jacobian of $\inv$ and then the Riemannian element of volume of $\hbox{inv}^\star \Eucl$ are
$$\Jac(\inv)=|\, \cdot \, |^{-2n}\hbox{ and }dv_{\inv^\star\Eucl}=|\, \cdot\, |^{-2n}\, dx.$$
This transformation behaviour of polyharmonic operators
with respect to M\"obius transforms is classical, see e.g. Loewner \cite{Loewner} and references therein.
A convenient and easily accessible reference is also Gazzola-Grunau-Sweers \cite[Lemma 6.14]{GGS} .

\medskip\noindent We fix $x\in \rn\setminus \overline{\omega}$ and we consider
$x':=\inv(x)\in \omega_0\setminus\{0\}$. We define $\tilde{\varphi}(y):=\mu(y)\varphi\circ\inv(y)=|y|^{2k-n}\varphi(y/|y|^2)$ 
for $y\in \overline{\omega}_0\setminus\{0\}$. We find that $\tilde{\varphi}$ is vanishing around $0$ and therefore 
extends smoothly to $\overline{\omega}_0$. It follows from Green's representation formula that
$$\tilde{\varphi}(x')=\int_{\omega_0}G_{\omega_0}(x', y)(-\Delta)^k \tilde{\varphi}(y)\, dy.$$
Performing the change of variable $y=\inv(z)$ and using the above properties yields
$$
\tilde{\varphi}(x')=\int_{\rn\setminus \omega}|z|^{n+2k}G_{\omega_0}(x', \inv(z))(-\Delta)^k\varphi(z)|z|^{-2n}\, dz.
$$
Going back to the expression of $\varphi$ yields \eqref{claim:1}.

\medskip\noindent Given $\alpha$ a multi-index and $j\in\{1,\ldots,n\}$, there exists an homogeneous polynomial 
$P^\alpha_j$ of degree $|\alpha|+1$ such that
$$\partial^\alpha\inv(x)_j=\frac{P^\alpha_j(x)}{|x|^{2(|\alpha|+1)}}$$
for all $x\in\rn\setminus\{0\}$, where $|\alpha|$ is the length of the index. We fix $x,y\in \rn\setminus\omega$ such that 
$x\neq y$.   With help of the binomial formula, the derivative of order $\alpha$ with respect to $y$ 
is such that
\begin{equation}\label{ineq:1}
| \partial^\alpha_y G_{\omega^c}(x,y)|
\leq C|x|^{2k-n}\sum_{\beta\leq\alpha}|y|^{2k-n-|\alpha|+|\beta|}|\partial_y^\beta (G_{\omega_0}(\inv(x),\inv(y)))|,
\end{equation}
where we have adopted the standard order on multi-indices. For $|\beta|\geq 1$, the chain rule yields 
$$\partial^\beta(f\circ \inv)=\sum_{1\leq r\leq |\beta|}\sum_{I_1 + \ldots + I_r
=\beta}\sum_{j_1,\ldots,j_r}c^{(I_1,\ldots, I_r)}_{j_1,\ldots , j_r}
\partial^{I_1}\inv_{j_1}\ldots\partial^{I_r}\inv_{j_r}(\partial^{j_1\ldots  j_r}f)\circ \inv$$
for any function $f$ when the derivatives make sense. The second sum is taken over all decompositions of $\beta$ 
as a sum of  $r$ multi-indices and the $c^{(I_1,\ldots, I_r)}_{j_1,\ldots , j_r}$ are combinatorial
constants which can  be calculated explicitly. When restricting to suitable decompositions of $\beta$ these constants are equal to $1$. This  formula yields
\begin{equation}\label{ineq:2}
|\partial^\beta_y (G_{\omega_0}(\inv(x),\inv(y)))|\leq C\sum_{r\leq |\beta|}|y|^{-|\beta|-r}|(\nabla^r G_{\omega_0})(\inv(x),\inv(y))|
\end{equation}
for all $\beta\leq \alpha$. Here, $\nabla^r f=(\partial^\gamma f)_{|\gamma|=r}$ when this makes sense.

\medskip\noindent It follows from Krasovski{\u\i} \cites{K1,K2} that  for any $0\leq r\leq 2k$, there exists 
$C=C(\omega_0=\inv(\overline{\omega}^c)\cup\{0\},r)>0$ such that 
$$|\nabla^rG_{\omega_0}(x,y)|\leq C|x-y|^{2k-n-r}$$
for all $x,y\in\omega_0$, $x\neq y$. For the sake of completeness, we refer to Theorem \ref{Th:bnd} in the appendix 
where we comment on an alternative to Krasovski{\u\i}'s proof. Noting that
\begin{equation}\label{ineq:3}
|\inv(x)-\inv(y)|=\frac{|x-y|}{|x|\cdot|y|}
\end{equation}
and putting \eqref{ineq:1}, \eqref{ineq:2} and \eqref{ineq:3}  together  yields
$$
|\partial_y^\alpha G_{\omega^c}(x,y)|\leq C|y|^{-|\alpha|}\sum_{r\leq |\alpha|}|x|^r|x-y|^{2k-n-r}.
$$
This proves the claim for $\eps=1$, while for arbitrary $\eps>0$ it follows from the previous reasoning and the observation
that  $G_{(\eps\omega)^c}(x,y):=\eps^{2k-n}G_{\omega^c}(x/\eps,y/\eps)$.
\end{proof}

\medskip\noindent{\bf Step 2. Control outside a small annulus.}   

\medskip\noindent
Given $\delta\in \left(0,\frac{d(0,\partial\Omega)}{3}\right)$, we define $\eta_\delta\in C^\infty_c(\Omega)$ 
such that $\eta_\delta(x)=1$ for all $x\in B_\delta(0)$ and $\eta_\delta(x)=0$ for all $x\in \Omega\setminus B_{2\delta}(0)$. 
Given $\eps\in (0,\frac{\delta }{2\operatorname{diam}(\omega)})$ and $x,y\in \Omega_\eps$, we define
\begin{equation}\label{def:tG}
\tilde{G}_{\eps,\delta}(x,y):=\eta_\delta(y)G_{(\eps\omega)^c}(x,y)+(1-\eta_\delta(y))G_{\Omega}(x,y).
\end{equation}
We get that
\begin{multline*}
(-\Delta)^k \tilde{G}_{\eps,\delta}(x,\, \cdot\, )=\eta_\delta(-\Delta)^kG_{(\eps\omega)^c}(x,\, \cdot\, )
+(1-\eta_\delta)(-\Delta)^kG_{\Omega}(x,\, \cdot\, )\\+\sum_{i<2k}\left(A_i(\nabla^{2k-i}\eta_\delta,\nabla^iG_{(\eps\omega)^c}(x,\, \cdot\, ))
+A_i(\nabla^{2k-i}(1-\eta_\delta),\nabla^iG_{\Omega}(x,\, \cdot\, ))\right),
\end{multline*}
where the $A_i's$ are contractions of suitable tensors, that is bilinear forms with smooth coefficients. 
Therefore, for any $x\in \Omega_\eps$, there exists $f_{\eps,\delta,x}$ such that
$$
(-\Delta)^k \tilde{G}_{\eps,\delta}(x,\, \cdot\, )=\delta_x+f_{\eps,\delta,x}\hbox{ in }{\mathcal D}'(\Omega_\eps).
$$
Moreover, the pointwise control \eqref{bnd:g:eps} yields
$$
|f_{\eps,\delta,x}(y)|\leq C \, \cdot\, {\bf 1}_{B_{2\delta}(0)\setminus B_{\delta}(0)}|x-y|^{1-n}
$$
for all $x,y\in \Omega_\eps$, $x\neq y$. In particular, there exists $C(\delta)>0$ such that
\begin{equation}\label{bnd:f:eps}
\Vert f_{\eps,\delta,x}\Vert_{L^\infty(\Omega_\eps)}\leq C(\delta)\hbox{ for all }\eps>0\hbox{ and }x\in \Omega_{\eps,\delta}
\end{equation}
where
$$
\Omega_{\eps,\delta}:=\left(\Omega_\eps\cap B_{\delta/2}(0)\right)\cup \left(\Omega_\eps\setminus \overline{B_{3\delta}(0)}\right)
=\Omega_\eps \setminus \left(\overline{B_{3\delta}(0)} \setminus B_{\delta/2}(0)\right).
$$
Then it follows from elliptic theory that for any $x\in \Omega_{\eps,\delta}$, there exists $u_{x,\eps,\delta}\in W^{k,2}_0(\Omega_\eps)$ such that
\begin{equation}
\left\{\begin{array}{ll}\label{eq:uxd}
(-\Delta)^k u_{x,\eps,\delta}=f_{\eps,\delta,x} & \hbox{ in }\Omega_\eps,\\
\partial_\nu^{(i)} u_{x,\eps,\delta}=0& \hbox{ for all }i=0,\ldots,k-1\hbox{ on }\partial\Omega_\eps.
\end{array}\right.
\end{equation}

\medskip\noindent 
We claim that $u_{x,\eps,\delta}\in C^{2k-1}(\Omega_\eps)$ for all $\eps,\delta>0$ and $x\in \Omega_{\eps,\delta}$. 
Moreover, there exists $C(\delta)>0$ such that
\begin{equation}\label{bnd:u:eps}
\Vert u_{x,\eps,\delta}\Vert_{C^{2k-1}(\Omega\setminus B_{\delta/4}(0))}\leq C(\delta)
\end{equation}
for all admissible $\eps,\delta>0$ and $x\in \Omega_{\eps,\delta}$.

\medskip\noindent 
We prove this claim. For simplicity, we define 
$$((-\Delta)^{k/2}\psi)^2:=\left\{\begin{array}{ll}
((-\Delta)^{l}\psi)^2 & \hbox{ if }k=2l\hbox{ is }even\\
|\nabla (-\Delta)^l\psi|^2& \hbox{ if }k=2l+1\hbox{ is }odd.
\end{array}\right.$$
As a consequence, $u\mapsto \Vert (-\Delta)^{k/2}u\Vert_2$ is a norm on $W^{k,2}_0(\Omega_\eps)$, the completion of $C^\infty_c(\Omega_\eps)$ for the usual norm. 
Multiplying \eqref{eq:uxd} by $u_{x,\eps,\delta}$ and integrating by parts yields with H\"older's inequality
\begin{eqnarray*}
\int_{\Omega}((-\Delta)^{k/2} u_{x,\eps,\delta})^2\, dx&=&\int_{\Omega_\eps}((-\Delta)^{k/2} u_{x,\eps,\delta})^2\, dx\\
&=&\int_{\Omega_{\eps}}f_{\eps,\delta,x}u_{x,\eps,\delta}\, dy\leq \Vert f_{\eps,\delta,x}\Vert_{\frac{2n}{n+2k}}\Vert u_{x,\eps,\delta}\Vert_{\frac{2n}{n-2k}}.
\end{eqnarray*}
Sobolev's inequality yields the existence of $C_{n,k}>0$ such that 
$$\Vert u\Vert_{\frac{2n}{n-2k}}\leq C_{n,k}\Vert (-\Delta)^{k/2} u\Vert_2$$
for all $u\in C^\infty_c(\rn)$. The density of $C^\infty_c(\Omega_\eps)$ in $W^{k,2}_0(\Omega_\eps)$ 
allows to conclude that
$$
\Vert u_{x,\eps,\delta}\Vert_{\frac{2n}{n-2k}}^2\leq C^2_{n,k} \Vert f_{\eps,\delta,x}\Vert_{\frac{2n}{n+2k}}\Vert u_{x,\eps,\delta}\Vert_{\frac{2n}{n-2k}}
$$
for all $\eps>0$ and $x\in \Omega_{\eps,\delta}$. Therefore $\Vert u_{x,\eps,\delta}\Vert_{\frac{2n}{n-2k}}\leq C'(\delta)$.

\medskip\noindent It follows from elliptic theory (see for instance Agmon-Douglis-Nirenberg \cite{ADN}) that for all 
$p>1$ and all $\delta'>0$, there exists $C(\delta')>0$ such that
$$
\Vert u_{x,\eps,\delta}\Vert_{W^{2k,p}(\Omega\setminus B_{\delta'}(0))}
\leq C(\delta',p,\Omega)(\Vert f_{\eps,\delta,x}\Vert_p+\Vert u_{x,\eps,\delta}\Vert_p).
$$
The claim   \eqref{bnd:u:eps} follows from this inequality, Sobolev's inequalities and iterations.

\medskip\noindent 
It remains to gain control of $u_{x,\eps,\delta}$ in $B_{\delta'}(0) \setminus (\varepsilon\omega)$. To this
end we consider $\eta_\delta u_{x,\eps,\delta}$ and observe that this function solves a Dirichlet problem
in the exterior domain $(\overline{\varepsilon\omega})^c$. Indeed, we have that
\begin{eqnarray*}
(-\Delta)^k(\eta_\delta u_{x,\eps,\delta})&=& \eta_\delta(-\Delta)^k u_{x,\eps,\delta}+\sum_{i<2k}A_i(\nabla^{2k-i}\eta_\delta,\nabla^i u_{x,\eps,\delta})\\
&=&  \eta_\delta f_{\eps,\delta,x}+\sum_{i<2k}A_i(\nabla^{2k-i}\eta_\delta,\nabla^i u_{x,\eps,\delta}):=\tilde{f}_{\eps,\delta,x}
\end{eqnarray*}
where the $A_i$'s are as above.  We observe that
$$
\operatorname{supp }\tilde{f}_{\eps,\delta,x}\subset B_{2\delta}(0)\hbox{ and }\Vert \tilde{f}_{\eps,\delta,x}\Vert_\infty\leq C(\delta)
$$
for all $x\in \Omega_{\eps,\delta}$. Since $\eta_\delta u_{x,\eps,\delta}$ has compact support in $\rn\setminus\eps\omega$ and vanishes up to $(k-1)^{th}$ order on 
$\partial(\eps\omega)$, Green's representation formula \eqref{eq:green:ext} yields
$$(\eta_\delta u_{x,\eps,\delta})(z)=\int_{\rn\setminus \eps\omega} G_{(\eps\omega)^c}(z,y)\tilde{f}_{\eps,\delta,x}(y)\, dy$$
for all $z\in \rn\setminus \overline{\eps\omega}$. Consequently, for any $z\in B_\delta(0)\setminus (\eps\omega)$, one gets
\begin{eqnarray*}
| u_{x,\eps,\delta}(z)|&=&|(\eta_\delta u_{x,\eps,\delta})(z)|\leq \int_{\rn\setminus \eps\omega} |G_{(\eps\omega)^c}(z,y)\tilde{f}_{\eps,\delta,x}(y)|\, dy\\
&\leq& C(\delta) \int_{B_{2\delta(0)}\setminus \eps\omega} |y-z|^{2k-n}\, dz\leq C(\delta).
\end{eqnarray*}
This inequality combined with \eqref{bnd:u:eps} yields
\begin{equation}\label{bnd:u}
\Vert u_{x,\eps,\delta}\Vert_{L^\infty(\Omega_\eps)}\leq C(\delta)\hbox{ for all }x\in \Omega_{\eps,\delta}.
\end{equation}
As a consequence, we find that 
$$(-\Delta)^k(\tilde{G}_{\eps,\delta}(x,\, \cdot\, )-u_{x,\eps,\delta})=\delta_x\hbox{ weakly in }{\mathcal D}'(\Omega_\eps)$$
and $\partial_\nu^{(i)}(\tilde{G}_{\eps,\delta}(x,\, \cdot\, )-u_{x,\eps,\delta})=0$ on $\partial\Omega_\eps$ for all $x\in \Omega_{\eps,\delta}$ and all $i=0,\ldots,k-1$. 
The uniqueness of the Green function implies that
\begin{equation}\label{eq:G:tG}
G_{\Omega_\eps}(x,\, \cdot\, )=\tilde{G}_{\eps,\delta}(x,\, \cdot\, )-u_{x,\eps,\delta}
\end{equation}
and then, using \eqref{eq:green:ext} and \eqref{bnd:u}, we arrive at 
\begin{eqnarray}\label{bnd:G:1}
|G_{\Omega_\eps}(x,y)|&\leq &|\tilde{G}_{\eps,\delta}(x,y)|+|u_{x,\eps,\delta}(y)|\leq C(\Omega,\omega)|x-y|^{2k-n}+C(\delta)\\
&\leq &C(\delta)|x-y|^{2k-n}
\end{eqnarray}
for all $x\in \Omega_{\eps,\delta}=(\Omega_\eps\cap B_{\delta/2})\cup (\Omega_\eps\setminus \overline{B}_{3\delta}(0))$ 
and all $y\in\Omega_\eps$. 

\medskip\noindent{\bf Step 3.}

\begin{proof}[Conclusion of the proof of Theorem~\ref{Th:1}]
We fix $\delta_0\in (0,\frac{d(x_0,\partial \Omega)}{21})$. We apply Step 2 with $\delta:=\delta_0$ and to 
$\delta:=7\delta_0$. Since $\Omega_{\varepsilon}=\Omega_{\varepsilon,\delta_0}\cup \Omega_{\varepsilon,3\delta_0}$, it  follows from \eqref{bnd:G:1} 
that there exists $C>0$ such that
$$|G_{\Omega\setminus\overline{\eps\omega}}(x,y)|\leq C|x-y|^{2k-n}\hbox{ for all }x,y\in \Omega\setminus\overline{\eps\omega}, \; x\neq y.$$
This proves Theorem \ref{Th:1} for $q=1/42$. For $q\in (1/42,1)$, instead of $\delta/2$, $\delta$, $2 \delta$, $3\delta $, in Step 2 one has to work with
$\delta/(1+\sigma)$, $\delta$, $(1+\sigma)\delta$, $(1+2\sigma)\delta$ with $\sigma >0$ sufficiently close to $0$. Alternatively
one may argue that for 
$$\varepsilon \in [ (1/42) d(x_0, \partial\Omega)/ \operatorname{diam}(\omega),
 q d(x_0, \partial\Omega)/ \operatorname{diam}(\omega))
$$ 
the boundaries of the $\Omega_\varepsilon$
enjoy uniform $C^{2k,\theta}$-properties so that (\ref{eq:Greenestimate1}) holds uniformly with
respect to these $\varepsilon $.
\end{proof}

\newpage\noindent {\bf Step 4.} 
\begin{proof}[Proof of Proposition~\ref{prop:der}]
We argue by contradiction and assume that there exist $1\leq r\leq 2k$ and $C>0$ such that 
\begin{equation}\label{hyp:bnd:grad}
|x-y|^{n-2k+r}|\nabla^r_y G_{\Omega_\eps}(x,y)|\leq C
\end{equation}
for all $x,y\in \Omega_\eps$, $x\neq y$, uniformly in $\varepsilon\to 0$. 
For any $x,y\in (\eps^{-1}\Omega)\setminus \omega$, we define $G_\eps(x,y):=\eps^{n-2k}G_{\Omega_\eps}(\eps x,\eps y)$. 
It follows from \eqref{def:tG}, \eqref{bnd:u:eps}, and \eqref{eq:G:tG} that for any $x\in \omega^c$, we have that
$$\lim_{\eps\to 0}G_\eps(x,y)=G_{\omega^c}(x,y)$$
in $C^{0}_{loc}(\rn\setminus(\omega\cup\{x\}))$. Since $(-\Delta)^k G_\eps(x,\, \cdot\, )=0$ and $G_\eps(x,\, \cdot\, )$ vanishes on $\partial\omega$ up to order $(k-1)$, 
elliptic regularity yields convergence in $C^{2k}_{loc}(\omega^c\setminus \{x\})$. Rewriting \eqref{hyp:bnd:grad} for $G_\eps$ and passing to the limit $\eps\to 0$ yields
\begin{equation}\label{hyp:bnd:grad:2}
|x-y|^{n-2k+r}|\nabla^r_y G_{\omega^c}(x,y)|\leq C
\end{equation}
for all $x,y\in \rn\setminus\omega$, $x\neq y$. We fix $x\neq 0$ and we define $G_R(z):=R^{n-2k}G_{\omega^c}(Rx,z)$ 
for all $z\in \omega^c$ and $R>R_0$ large enough. 
It follows from the explicit expression of $G_{\omega^c}$ in (\ref{eq:Greenfunction_small:hole}) that
\begin{equation}\label{cv:G}
\lim_{R\to +\infty}G_R(z)=G(z):=|x|^{2k-n}|z|^{2k-n}G_{\omega_0}\left(0,\frac{z}{|z|^2}\right)
\end{equation}
in $C^0_{loc}(\rn\setminus\omega)$. Since $(-\Delta)^kG_R=0$ and $G_R$ vanishes on $\partial\omega$ up to order $(k-1)$, elliptic regularity 
yields the convergence of $(G_R)$ 
to $G$ in $C^{2k}_{loc} (\omega^c)$. On the other hand, \eqref{hyp:bnd:grad:2} may be rewritten as
$$
|\nabla^r G_R(z)|\leq C R^{-r}|x-R^{-1}z|^{2k-n-r}
$$
for $z$ in a compact sudomain of $\rn\setminus\omega$ and $R$ large enough. Since $r\geq 1$, 
passing to the limit $R\to +\infty$ yields $\nabla^r G=0$ in $\rn\setminus\omega$, 
which contradicts the explicit expression \eqref{cv:G} of $G$. This concludes the proof of Proposition~\ref{prop:der}.
\end{proof}


\medskip\noindent {\bf Step 5.}
\begin{proof}[Proof of Proposition~\ref{prop:unbounded_negative_part}] 
Assume that (ii) does not hold. Then there exists a $C^{2k,\theta}$-smooth bouned domain $\omega_0\subset\rn$ such that $G_{\omega_0}$ attains some negative values, 
say at $(x_0,y_0)\in\omega_0\times\omega_0$, $x\neq 0$. We define $\omega:=(\inv(\rn\setminus \overline{\omega_0}))\cup\{0\}$ and 
$\Omega_\eps:=\Omega\setminus\overline{\eps\omega}$ where $\eps>0$ is small and $\Omega$ is a smooth bounded domain containing $0$. 
It follows from \eqref{def:tG}, \eqref{bnd:u}, and \eqref{eq:G:tG} that 
$$
\lim_{\eps\to 0}\eps^{n-2k}G_{\Omega_\eps}(\eps x,\eps y)=G_{\omega^c}(x,y)=|x|^{2k-n}|y|^{2k-n}G_{\omega_0}(\inv(x), \inv(y))
$$
for all $x,y\in \rn\setminus\overline{\omega}$. Choosing $x:=\inv(x_0)$ and $y:=\inv(y_0)$ yields 
$$\lim_{\eps\to 0}G_{\Omega_\eps}(\eps x,\eps y)=-\infty,$$
and then (i) does not hold. Conversely, if (ii) holds, then (i) holds.
\end{proof}

\appendix
\section{Pointwise control of the Green function for fixed domains}

The following result, under stronger smoothness assumptions on $\Omega$  but at the same time  
in a more general  context,  is due to Krasovski{\u\i} \cites{K1,K2}:
\begin{theorem} \label{Th:bnd}Let $\Omega\subset\rn$ be a $C^{2k,\theta}$-smooth bounded domain of $\rn$ with $2k<n$,
$\theta \in (0,1)$, and $k\geq 1$. 
Let $G_\Omega$ be the Green function for $(-\Delta)^k$ 
with Dirichlet boundary condition. Then for all $0\leq r\leq 2k$, there exists $C(\Omega, r)>0$ such that
\begin{equation}\label{ineq:nabla:G}
|\nabla^r_yG_{\Omega}(x,y)|\leq C|x-y|^{2k-n-r}
\end{equation}
for all $x,y\in\Omega$, $x\neq y$.
\end{theorem}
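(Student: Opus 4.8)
\medskip\noindent\textbf{Proof strategy.}
The plan is to reduce the estimate \eqref{ineq:nabla:G} to its case $r=0$ by a rescaling argument, and then to prove that case by a blow-up argument whose only ingredients are the scale-invariant energy/Sobolev estimate already used in Step~2 and the interior and boundary $L^p$- and Schauder estimates of Agmon--Douglis--Nirenberg \cite{ADN}; the latter are applied only on unit balls, where their constants depend on nothing beyond $n,k,\theta$ and a bound for the $C^{2k,\theta}$-norm of a boundary chart, and this is precisely why $C^{2k,\theta}$-regularity of $\partial\Omega$ suffices.

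\smallskip\noindent
For the \emph{reduction to $r=0$}, assume $|G_\Omega(x,y)|\le C|x-y|^{2k-n}$ for all $x\neq y$, fix $x\neq y$ and write $\rho:=|x-y|$. If $d(y,\partial\Omega)\ge\rho/2$, then $G_\Omega(x,\cdot\,)$ is polyharmonic on $B_{\rho/4}(y)$, so the scale-invariant interior estimate gives $|\nabla^r_y G_\Omega(x,y)|\le C\rho^{-r}\sup_{B_{\rho/4}(y)}|G_\Omega(x,\cdot\,)|$; since $|x-z|\ge\tfrac34\rho$ on $B_{\rho/4}(y)$, the case $r=0$ bounds the supremum by $C\rho^{2k-n}$, which is \eqref{ineq:nabla:G}. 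If $d(y,\partial\Omega)<\rho/2$, one rescales by $\rho$ around a nearest boundary point of $y$: since zooming in on a $C^{2k,\theta}$-surface only makes it flatter, the rescaled boundary has $C^{2k,\theta}$-norm on unit balls bounded uniformly in $\rho\in(0,1]$, so the boundary estimate, applied to the rescaled function --- polyharmonic near the image of $y$, which sits at distance $1$ from the singularity, and vanishing with its first $k-1$ normal derivatives on the rescaled boundary --- again yields \eqref{ineq:nabla:G}. It remains to prove $|G_\Omega(x,y)|\le C|x-y|^{2k-n}$.

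\smallskip\noindent
We argue by contradiction and blow-up. If $\sup_{x\neq y}|x-y|^{n-2k}|G_\Omega(x,y)|=+\infty$, choose $(x_j,y_j)$ with $\rho_j:=|x_j-y_j|$ and $\rho_j^{n-2k}|G_\Omega(x_j,y_j)|\to+\infty$; smoothness of $G_\Omega$ off the diagonal \cite{ADN} and compactness of $\overline\Omega$ force $\rho_j\to0$. After passing to a subsequence, let $c_j:=x_j$ if $d(x_j,\partial\Omega)/\rho_j\to+\infty$, and otherwise let $c_j$ be a nearest boundary point of $x_j$; set $\Omega_j:=\rho_j^{-1}(\Omega-c_j)$, $p_j:=\rho_j^{-1}(x_j-c_j)$, $w_j:=\rho_j^{-1}(y_j-c_j)$ and $v_j(w):=\rho_j^{n-2k}G_\Omega(x_j,c_j+\rho_j w)$. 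By the scaling of $(-\Delta)^k$ and uniqueness of the Green function, $v_j=G_{\Omega_j}(p_j,\cdot\,)$; moreover $|p_j|\le C$ and $|w_j-p_j|=1$ (hence $|w_j|\le C$), and near the origin the set $\partial\Omega_j$ is either absent (first case, for $j$ large) or has $C^{2k,\theta}$-norm bounded uniformly in $j$ (second case). The key step is the uniform a priori bound $\sup_j \|v_j\|_{L^1(\Omega_j\cap B_R(0))}<\infty$ for every $R>0$. To prove it, observe that for $f\in C^{0,\theta}_c(\Omega_j\cap B_R(0))$ the function $u_f:=\int_{\Omega_j}G_{\Omega_j}(\cdot\,,y)f(y)\,dy$ solves the polyharmonic Dirichlet problem with right-hand side $f$, and integration by parts combined with the Sobolev inequality of Step~2 yields $\|u_f\|_{L^{2n/(n-2k)}(\Omega_j)}\le C_{n,k}\|f\|_{L^{2n/(n+2k)}}$, a bound independent of $j$ and of $\operatorname{diam}\Omega_j$. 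Bootstrapping finitely many times with the scale-invariant interior $L^p$-estimates of \cite{ADN} --- and, where $B_R(0)$ meets $\partial\Omega_j$, the boundary $L^p$-estimates with their uniform constants --- upgrades this to $\|u_f\|_{L^\infty(\Omega_j\cap B_{R/2}(0))}\le C(R)\|f\|_{L^\infty}$; testing against a mollified, truncated $\operatorname{sign}G_{\Omega_j}(p_j,\cdot\,)$, with $R$ chosen large enough that $p_j\in B_{R/2}(0)$, then gives the claimed $L^1$-bound.

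\smallskip\noindent
Granting this, $v_j$ is polyharmonic on $B_{1/2}(w_j)\cap\Omega_j$ (since $|w_j-p_j|=1$) and vanishes there with its first $k-1$ normal derivatives on $\partial\Omega_j$; hence interior estimates for polyharmonic functions, together with the boundary estimate using the uniform $C^{2k,\theta}$-constant where $B_{1/2}(w_j)$ meets $\partial\Omega_j$, yield $|v_j(w_j)|\le C\|v_j\|_{L^1(\Omega_j\cap B_1(w_j))}\le C$. But $|v_j(w_j)|=\rho_j^{n-2k}|G_\Omega(x_j,y_j)|\to+\infty$, a contradiction, so $|G_\Omega(x,y)|\le C|x-y|^{2k-n}$ holds and, with the reduction above, Theorem~\ref{Th:bnd} follows. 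I expect the main obstacle to be precisely the uniform a priori bound: because the domains $\Omega_j$ expand, elliptic theory on a single fixed domain is unavailable, and one must combine the \emph{scale-invariant} energy estimate with scale-invariant interior and uniformly controlled boundary $L^p$-estimates, keeping every constant $j$-independent through the bootstrap and the duality step; the reduction to $r=0$ and the final contradiction are, by contrast, routine.
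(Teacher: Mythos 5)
Your reduction of the case $r\ge 1$ to the case $r=0$ is exactly the paper's argument: apply the scale-invariant interior/boundary Schauder estimate of Agmon--Douglis--Nirenberg on balls of radius comparable to $|x-y|$ centred at $y$, using that zooming in on a $C^{2k,\theta}$-boundary only improves its chart norms. The only real divergence is in the case $r=0$: the paper does not prove it at all but cites Grunau--Robert \cite{GR} (see also \cite[Propositions 4.22--4.23]{GGS}), whereas you supply a self-contained blow-up proof. Your blow-up argument is sound and is in substance the argument of the cited references: the contradiction sequence $\rho_j^{n-2k}|G_\Omega(x_j,y_j)|\to\infty$ forces $\rho_j\to 0$; the rescaled functions $v_j=G_{\Omega_j}(p_j,\cdot\,)$ live on domains whose boundaries are uniformly $C^{2k,\theta}$ near the relevant points; the uniform local $L^1$-bound follows by duality from the scale-invariant estimate $\Vert u_f\Vert_{2n/(n-2k)}\le C\Vert (-\Delta)^{k/2}u_f\Vert_2\le C\Vert f\Vert_{2n/(n+2k)}$ together with a finite $L^p$-bootstrap on fixed balls; and the local "$L^\infty$ from $L^1$" estimate for polyharmonic functions with homogeneous Dirichlet data at unit distance from the singularity gives the contradiction. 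What your version buys is independence from \cite{GR} at the cost of re-deriving its main estimate; what the paper's version buys is brevity. One point worth making explicit if you write this up: the passage from the $L^1$-bound on $v_j$ to the pointwise bound $|v_j(w_j)|\le C$ near $\partial\Omega_j$ needs the boundary $L^p$-estimates starting from an $L^p$-norm with $p>1$, so you should either interpolate (the $L^{2n/(n-2k)}$-bound you already have does this) or invoke the local maximum property in the form of \cite[Proposition 4.22]{GGS}; the raw ADN estimates are not stated for $p=1$.
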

We sketch here an alternative proof.
\begin{proof} The case $r=0$ and $k=2$ under the smoothness assumptions as in the theorem 
is treated in  Grunau-Robert \cite[Theorem 4]{GR} (see also Gazzola-Grunau-Sweers \cite[Propositions 4.22 and 4.23]{GGS} for an exposition in 
book form). By making the obvious changes one may check that the proof can be extended to any $k\geq 1$
and $n>2k$. (Only the discussion of the smaller dimensions $n\le 2k$ requires more care.)
This means that there exists a constant $C(\Omega)>0$ such that 
\begin{equation}\label{bnd:G:0}
|G_\Omega(x,y)|\leq C(\Omega)|x-y|^{2k-n}
\end{equation}
for all $x,y\in\Omega$, $x\neq y$. We fix $r\geq 1$ and we prove \eqref{ineq:nabla:G} by using
local elliptic estimates and rescaling arguments. We proceed as in Grunau-Gazzola-Sweers \cite[Prop. 4.23]{GGS}
and use the following local Schauder estimate from Agmon-Douglis-Nirenberg \cite[Theorem 9.3]{ADN}
which holds true also close to $\partial \Omega$.
 For any two
concentric balls $B_{R}\subset B_{2R}$ 
and any polyharmonic function $v$ on  $B_{2R}\cap \Omega$
satisfying homogeneous Dirichlet boundary conditions on $B_{2R}\cap \partial\Omega$
we have
\begin{equation} \label{scaledlocalboundaryestimate.2}
\Vert \nabla^r v\Vert _{L^{\infty }(B_{R}\cap \Omega)}
\leq \frac{C}{R^r}\Vert
v\Vert _{L^{\infty }(B_{2R}\cap \Omega )}. 
\end{equation}
The constant is uniform in $R$; the behaviour with respect to (small)
$R$ is obtained by means of scaling.

Keeping $x\in \Omega$ fixed, for
any $y\in \Omega \setminus \{x\}$ we choose $R=|x-y|/4$ and apply 
(\ref{scaledlocalboundaryestimate.2}) and (\ref{bnd:G:0})  in 
$B_{R}(y)\subset B_{2R}(y)$ to $G_{\Omega}(x,\, \cdot \, )$. 
This proves (\ref{ineq:nabla:G}).

\end{proof}

\end{document}